\def \X {\mathbb{X}(\Omega)}
\def \N {\mathbb{N}}
\def \R {\mathbb{R}}
\def \de {\partial}
\def \LL {\mathcal{L}_{\alpha}}
\theoremstyle{definition}
\newtheorem{definition}{Definition}[section]
\newtheorem{remark}[definition]{Remark}
\theoremstyle{plain}
\newtheorem{theorem}[definition]{Theorem}
\newtheorem{proposition}[definition]{Proposition}
\newtheorem{lemma}[definition]{Lemma}
\numberwithin{equation}{section}
\begin{document}
\title[Nonpositive mixed operators]{Variational methods for nonpositive mixed local--nonlocal operators}


\author[A.\,Maione]{Alberto Maione$^{\,1^*}$}
\author[D.\,Mugnai]{Dimitri Mugnai$^{\,2\,\dagger}$}
\author[E.\,Vecchi]{Eugenio Vecchi$^{\,3\,\dagger}$}
\address
{$^1$ Abteilung f\"{u}r Angewandte Mathematik\newline\indent Albert-Ludwigs-Universit\"{a}t Freiburg \newline\indent Hermann-Herder-Strasse 10, 79104 Freiburg im Breisgau, Germany}
\address
{$^2$ Dipartimento di Scienze Ecologiche e Biologiche (DEB)\newline\indent Università degli Studi della Tuscia \newline\indent
Largo dell'Università, 01100 Viterbo, Italy}
\address
{$^3$ Dipartimento di Matematica\newline\indent ALMA MATER STUDIORUM - Università di Bologna \newline\indent\bigskip 
Piazza di Porta San Donato 5, 40126 Bologna, Italy
\newline $^*$ Corresponding author\newline Correspondence to \url{alberto.maione@mathematik.uni-freiburg.de}\newline $^\dagger$ Contributing authors: these authors contributed equally to this work.}
%

\subjclass[2020]{35A15, 35J62, 35R11}

\keywords{Operators of mixed order, variational methods, Saddle Point Theorem, Linking Theorem, asymptotically linear growth, superlinear and subcritical growth}

\begin{abstract}
We prove the existence of a weak solution for boundary value problems driven by a mixed local--nonlocal operator. The main novelty is that such an operator is allowed to be nonpositive definite.
\end{abstract}
\maketitle 
\makeatletter
\enddoc@text
\let\enddoc@text\empty 
\makeatother
\section{Introduction}\label{sec.Intro}
In this paper we are concerned with semilinear elliptic problems driven by a mixed local--nonlocal operator of the form
\[
\LL u := -\Delta u +\alpha (-\Delta)^s u\,.
\]
Here $\alpha \in \mathbb{R}$ with no a priori restrictions, $\Delta u$ denotes the classical Laplace operator while $(-\Delta)^s u$, for fixed $s\in (0,1)$ is the fractional Laplacian, usually defined as
\[
(-\Delta)^s u(x) := C(n,s) \,\mbox{P.V.}\int_{\R^n}\frac{u(x)-u(y)}{|x-y|^{n+2s}}\, dy\,,
\]
where P.V. denotes the Cauchy principal value, that is
\[
\mbox{P.V.}\int_{\R^n}
  \frac{u(x)-u(y)}{|x-y|^{n+2s}}\, dy\\
  =\lim_{\epsilon\to0}\int_{\{y\in \R^n\,:\,|y-x|\geq \epsilon\}}
  \frac{u(x)-u(y)}{|x-y|^{n+2s}}\, dy\,.
\]
Clearly, when $\alpha=0$, one recovers the classical Laplacian, while, for $\alpha >0$, one is led to consider a positive operator which can be considered as a particular instance of an infinitesimal generator of a stochastic process involving a Brownian motion and a pure jump L\'{e}vy process. 

We briefly recall, focusing merely on the more recent (elliptic) PDEs oriented literature, that problems driven by operators of mixed type, even with a nonsingular nonlocal operator \cite{DPFR}, have raised a certain interest in the last few years, for example in connection with the study of optimal animal foraging strategies (see e.g. \cite{DV} and the references therein). From the pure mathematical point of view, the superposition of such operators generates a lack of scale invariance which may lead to unexpected complications.

At the present stage, and without aim of completeness, the investigations have taken into consideration interior regularity and maximum principles (see e.g. \cite{BDVV, BiMo,CDV22,DeFMin,GarainKinnunen,GarainLindgren}), boundary Harnack principle \cite{CKSV}, boundary regularity and overdetermined problems \cite{BMS, SUPR}, qualitative properties of solutions \cite{BDVVAcc}, existence of solutions and asymptotics (see e.g. \cite{BDVV5, BMV, BMV2, BSM, SilvaSalort, DPLV1, DPLV2, GarainUkhlov, MPL, SalortVecchi}) and shape optimization problems \cite{BDVV2, BDVV3, GoelSreenadh}.
\medskip

In this paper we deal with the following boundary value problem:
\begin{equation}\label{eq:Problem}
\left\{ \begin{array}{rl}
           \LL u = f(x,u)\,, & \textrm{in } \Omega\\
           u = 0\,, & \textrm{in } \mathbb{R}^{n}\setminus \Omega
\end{array},\right.
\end{equation}
where $\Omega \subset \mathbb{R}^n$, with $n>2$, is an open and bounded set with $C^1$-smooth boundary. 

In particular, we consider two different sets of assumptions on the nonlinear term $f:\Omega \times \mathbb{R} \to \mathbb{R}$, which is always supposed to be a Carath\'{e}odory function: first, we deal with the {\it asymptotically linear case}, and second with the {\it superlinear and subcritical case}.

In the first setting, we assume that $f$ has at most linear growth, according to the following assumption: there exist a function $a\in L^{2}(\Omega)$ and $b\in \mathbb{R}$ such that
\begin{equation}\label{f_{lg}}
    |f(x,t)| \leq a(x) + b|t|\quad \textrm{for all } t \in \mathbb{R} \textrm{ and  for a.e. } x \in \Omega\,.
\end{equation}
In order to state our result, we also need the following measurable functions:
\begin{equation}\label{eq:Defvv}
\underline{v}(x) := \liminf_{|t|\to +\infty}\dfrac{f(x,t)}{t} \leq \limsup_{|t|\to +\infty}\dfrac{f(x,t)}{t}=: \overline{v}(x).
\end{equation}
A trivial example of such function $f$ is given by $f(x,t)=\lambda t +a(x)$, where $\lambda\in \R$ and $a\in L^2(\Omega)$.

Instead, in the superlinear and subcritical case, we assume:
\begin{equation}\tag{$i$}\label{(i)}
    f(x,0)=0\quad\textrm{for a.e. } x \in \Omega\,;
\end{equation}
there exist a function $a\in L^{\infty}(\Omega)_+$, 
a number $b\in \mathbb{R}$ and $r\in(2,2^*)$ such that
\begin{equation}\tag{$ii$}\label{(ii)}
    |f(x,t)| \leq a(x) + b|t|^{r-1}\quad \textrm{for all } t \in \mathbb{R} \textrm{ and  for a.e. } x \in \Omega\,;
\end{equation}
there exist $\mu>2,\,\tilde \mu>2$, $R>0$, $c>0$, $A\in L^\infty(\Omega)$ and $d\in L^{1}(\Omega)_+$ such that 
\begin{equation}\tag{$iii$}\label{(iii)}
\begin{split}
    0<\mu F(x,t)-\mu A(x)\frac{t^2}{2}\leq f(x,t)t -A(x)t^2 \quad &\textrm{for all } |t|\geq R \textrm{ and  for a.e. } x \in \Omega\,,\\
    F(x,t)\geq c|t|^{\tilde \mu}-d(x) \quad &\textrm{for all } t \in \mathbb{R} \textrm{ and  for a.e. } x \in \Omega\,,
\end{split}
\end{equation}
where $F(x,t)=\int_0^t f(x,\sigma)\,d\sigma$ for any $t\in\mathbb{R}$.

Here $2^*$ denotes the classical Sobolev critical exponent, namely
\[
2^*=\dfrac{2n}{n-2}\,.
\]
A trivial example for $f$ in this case is given by $f(x,t)=\lambda t+|t|^{p-2}t$, with $\lambda\in \R$ and $p\in (2,2^*)$.
\begin{remark}
We remark that condition $(iii)$ means that $f(x,t)-A(x)t$ satisfies the Ambrosetti-Rabinowitz condition \cite{AR}, where the growth from below  is necessary, see \cite{addendum}. For the model just above we clearly have $A=\lambda$.
\end{remark}
As for the asymptotically linear case, we need to introduce asymptotic functions, which are now relevant as $t\to 0$:
\begin{equation}\label{iv}
    \underline{\Theta}(x) := \liminf_{t\to 0}\dfrac{f(x,t)}{t} \leq \limsup_{t\to 0}\dfrac{f(x,t)}{t}=: \overline{\Theta}(x)\,.
\end{equation}
Our aim is to prove the existence of weak solutions to problem \eqref{eq:Problem} (see section \ref{sec.NotPrel} for the precise definition).
As partially expected, if $\alpha \geq 0$, our existence results (see Theorem \ref{thmASY} and Theorem \ref{thmSL}) are either known or applications of standard variational methods (for instance, see \cite{limit} or \cite{Mugnai} when $\alpha =0$, or \cite{LIBRO} for the pure fractional case). 
A similar behaviour happens to hold if we take
\begin{equation*}
-\dfrac{1}{C}<\alpha<0\,,
\end{equation*}
where $C>0$ is the constant of the continuous embedding $H^{1}_{0} \subset H^{s}$ (see e.g. \cite{DRV}), i.e.
\begin{equation*}
[u]_s^2 := \iint_{\mathbb{R}^{2n}}\dfrac{|u(x)-u(y)|^2}{|x-y|^{n+2s}}\, dx dy \leq C \, \int_{\Omega}|\nabla u|^2 \, dx\,.
\end{equation*}
In this perspective, the probably more interesting case is for $\alpha\leq-\tfrac{1}{C}$.

Indeed, the situation becomes suddenly more delicate, mainly because the local--nonlocal operator is not more positive definite. As a consequence, the bilinear form naturally associated to it does not induce a scalar product nor a norm, the variational spectrum may exhibit negative eigenvalues and even the maximum principles may fail, see e.g. \cite{BDVV}. 
\medskip

Let $\{\lambda_k\}_{k\in \mathbb{N}}$ be the sequence of eigenvalues of $\mathcal{L}_\alpha$, see Proposition \ref{prop.Eigenvalue} for details.
The main result in the asymptotically linear case states as follows.
\begin{theorem}\label{thmASY}
Assume that $f$ satisfies \eqref{f_{lg}} and that the limits in \eqref{eq:Defvv} are uniform in $x$.
If either
\begin{itemize}
\item $\overline{v}(x) < \lambda_{1}$ for a.e. $x \in \Omega$, or
\item there exists $k \in \mathbb{N}$ such that $\lambda_{k}<\underline{v}(x) \leq \overline{v}(x)<\lambda_{k+1}$ for a.e. $x \in \Omega$,
\end{itemize}
then problem \eqref{eq:Problem} admits a weak solution.
\end{theorem}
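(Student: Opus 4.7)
The plan is to apply the direct method of the calculus of variations in Case~1 and Rabinowitz's Saddle Point Theorem in Case~2 to the energy functional
\[
J(u) = \frac{1}{2} B_\alpha(u,u) - \int_\Omega F(x,u)\,dx, \qquad u \in \X,
\]
where $B_\alpha$ is the symmetric bilinear form associated with $\LL$ and $F(x,t)=\int_0^t f(x,\sigma)\,d\sigma$. Under \eqref{f_{lg}}, standard arguments give $J\in C^1(\X;\R)$ with critical points coinciding with the weak solutions of \eqref{eq:Problem}. The eigenbasis $\{e_j\}_{j\in\N}$ of $\LL$ from Proposition~\ref{prop.Eigenvalue} supplies, in the second case, the spectral splitting $\X = V\oplus W$ with $V:=\mathrm{span}\{e_1,\dots,e_k\}$ finite-dimensional and $W$ its $L^2$-orthogonal complement, the two subspaces being also $B_\alpha$-orthogonal.

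The central technical step, common to both cases, is the Palais--Smale condition. By the compact embedding $\X\hookrightarrow L^2(\Omega)$ and \eqref{f_{lg}}, the nonlinear part of $J'$ is a compact operator, so it suffices to bound an arbitrary PS sequence $(u_n)\subset\X$. Arguing by contradiction, suppose $\|u_n\|_\X\to\infty$ and set $v_n:=u_n/\|u_n\|_\X$. Up to a subsequence, $v_n\rightharpoonup v$ in $\X$ and $v_n\to v$ in $L^2(\Omega)$. Dividing $J'(u_n)\to 0$ by $\|u_n\|_\X$ and letting $n\to\infty$ --- here the \emph{uniformity} of the limits in \eqref{eq:Defvv} is essential --- produces $\LL v = v_\infty(\cdot)v$ weakly on $\Omega$, for some measurable $v_\infty$ with $\underline v\le v_\infty\le\overline v$ a.e. Testing this weighted linear equation against the projections $v_-,v_+$ of $v$ onto $V$ and $W$ (using $B_\alpha$-orthogonality together with the strict gap $\overline v<\lambda_1$ in Case~1, respectively $\lambda_k<\underline v\le\overline v<\lambda_{k+1}$ in Case~2) forces $v\equiv 0$, hence $v_n\to 0$ in $L^2(\Omega)$. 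On the infinite-dimensional $W$, however, $B_\alpha$ is coercive with respect to the $\X$-norm --- since all eigenvalues on $W$ are bounded below by $\lambda_{k+1}>0$ --- while $B_\alpha(v_n,v_n)\to 0$ from the identity $\langle J'(u_n),u_n\rangle/\|u_n\|_\X^2\to 0$ combined with \eqref{f_{lg}}. This contradicts $\|v_n\|_\X=1$.

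Granted PS, Case~1 is handled by direct minimization: from the coercivity of the shifted form $B_\alpha(u,u)-\int(\overline v(x)+\e)u^2\,dx$ on $L^2$ (a consequence of $\overline v<\lambda_1$ and the spectral decomposition) and the quadratic upper bound $F(x,t)\le\tfrac12(\overline v(x)+\e)t^2+C_\e$ one obtains $J(u)\ge -C$ on $\X$; Ekeland's principle produces a minimizing PS sequence, which by PS converges to a critical point. Case~2 is handled by the Saddle Point Theorem applied to $\X=V\oplus W$: on the finite-dimensional $V$, $B_\alpha(u,u)\le\lambda_k\|u\|_{L^2}^2$ combined with the lower asymptotic $F(x,t)\ge\tfrac12(\underline v(x)-\e)t^2-C_\e$ yields $J(u)\to-\infty$ as $\|u\|_V\to\infty$; on $W$, the dual estimates $B_\alpha(u,u)\ge\lambda_{k+1}\|u\|_{L^2}^2$ and $F(x,t)\le\tfrac12(\overline v(x)+\e)t^2+C_\e$ yield $J$ bounded below on $W$. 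The linking geometry being verified, the saddle point theorem delivers the desired critical point.

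The main obstacle is the boundedness of PS sequences in the regime $\alpha\le -1/C$, where $B_\alpha$ loses coercivity on $\X$ and the usual argument (isolating $\|u_n\|_\X^2$ from $\langle J'(u_n),u_n\rangle$) collapses. The remedy is the spectral normalization sketched above: the strict gap conditions place $\underline v,\overline v$ in a gap of the spectrum of $\LL$, so a would-be unbounded PS sequence would produce, after normalization and passage to the limit, a nontrivial solution of a weighted eigenvalue problem whose weight is strictly excluded by the gap. The residual $\X$-coercivity of $B_\alpha$ on the infinite-dimensional complement $W$ --- where the spectrum of $\LL$ is uniformly positive --- then compensates for the lack of global coercivity and closes the argument.
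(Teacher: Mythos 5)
Your overall strategy (direct method in Case~1, Saddle Point Theorem in Case~2, Palais--Smale via normalization and the spectral gap) matches the paper, and the claims in your PS argument up to the conclusion ``$v\equiv 0$'' are essentially the paper's Claims A and B. However, the \emph{final step} of your PS argument has a genuine gap.

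You close the argument by asserting that $B_\alpha$ is coercive on $W=\mathbb{P}_{k+1}$ with respect to the $\mathbb{X}$-norm ``since all eigenvalues on $W$ are bounded below by $\lambda_{k+1}>0$,'' and then you deduce $v_n\to 0$ in $\mathbb{X}$ from $B_\alpha(v_n,v_n)\to 0$, contradicting $\|v_n\|_{\mathbb{X}}=1$. But the theorem does \emph{not} assume $\lambda_{k+1}>0$. In the regime $\alpha\le -1/C$ that is the whole point of the paper, $\lambda_1<0$, and for $k+1<N_0$ (with $N_0$ the first index of a positive eigenvalue) one has $\lambda_{k+1}\le 0$. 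In particular this already fails for Case~1 with $k=0$: there $W=\mathbb{X}$, and coercivity of $B_\alpha$ on all of $\mathbb{X}$ is exactly what is lost when $\alpha\le -1/C$. When $\lambda_{k+1}\le 0$, the inequality $B_\alpha(u,u)\ge\lambda_{k+1}\|u\|_{L^2}^2$ is useless and gives no positive lower bound on $B_\alpha(u,u)/\|u\|_{\mathbb{X}}^2$ (indeed $B_\alpha(u_{k+1},u_{k+1})=\lambda_{k+1}\|u_{k+1}\|_{L^2}^2\le 0$ while $\|u_{k+1}\|_{\mathbb{X}}>0$). So the coercivity claim does not hold, and your contradiction does not follow.

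The paper avoids this by exploiting the \emph{compact embedding} $\mathbb{X}(\Omega)\hookrightarrow H^s_0(\Omega)$ rather than any residual coercivity of $B_\alpha$. Once $w_j\rightharpoonup u_0=0$ in $\mathbb{X}$, compactness forces $[w_j]_s\to 0$, and testing the renormalized equation with $\varphi=w_j$ gives
\[
\Big|\,1+\alpha\,[w_j]_s^2-\int_\Omega \frac{f(x,u_j)}{\|u_j\|_{\mathbb{X}}}\,w_j\,dx\,\Big|\le \frac{M}{\|u_j\|_{\mathbb{X}}}\to 0\,.
\]
Here $[w_j]_s^2\to 0$ by compactness, and the integral tends to $0$ because $f(\cdot,u_j)/\|u_j\|_{\mathbb{X}}\rightharpoonup 0$ in $L^2$ while $w_j\to 0$ strongly in $L^2$; this yields the clean contradiction $1\le 0$, without ever needing a sign condition on $\lambda_{k+1}$. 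To repair your proof you should replace the coercivity-on-$W$ step by this compactness argument. (A secondary, smaller remark: in Case~1 the paper actually proves full coercivity $\liminf_{\|u\|_{\mathbb{X}}\to\infty}\mathcal{J}(u)/\|u\|_{\mathbb{X}}^2>0$ and applies Weierstrass directly without needing PS; your route through boundedness below plus Ekeland plus PS is valid in principle, but it routes Case~1 through the very PS argument that currently contains the gap.)
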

Its counterpart in the superlinear and subcritical case states instead as follows.
\begin{theorem}\label{thmSL}
Assume $f$ satisfies \eqref{(i)}, \eqref{(ii)} and \eqref{(iii)} and that the limits in \eqref{iv} are uniform in $x$.
If either
\begin{itemize}
\item $\overline{\Theta}(x)<\lambda_1$ a.e. $x \in \Omega$ or
\item there exists $k \in \mathbb{N}$ such that $\lambda_{k}\leq\underline{\Theta}(x)\leq\overline{\Theta}(x)<\lambda_{k+1}$ for a.e. $x \in \Omega$ and
\begin{equation}\label{lastassumption}
    F(x,t)\geq \lambda_k \tfrac{t^2}{2}\quad\text{for a.e. }x \in \Omega\text{ and for any }t\in\mathbb{R}\,,
\end{equation}
\end{itemize}
then problem \eqref{eq:Problem} admits a weak solution.
\end{theorem}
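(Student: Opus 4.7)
\textbf{Proof plan for Theorem \ref{thmSL}.} The natural energy functional associated to \eqref{eq:Problem} is
\[
J(u)=\frac{1}{2}\int_\Omega |\nabla u|^2\,dx+\frac{\alpha}{2}[u]_s^2-\int_\Omega F(x,u)\,dx,
\]
defined on $\mathbb{X}(\Omega)$ and of class $C^1$ under $(i)$--$(ii)$; its critical points are exactly the weak solutions of \eqref{eq:Problem}. The plan is to produce a critical point via the Mountain Pass Theorem in the first case and the Linking Theorem in the second, in both situations after verifying a Cerami condition at arbitrary level.

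The main technical obstacle is the Cerami condition itself. Since the quadratic part of $J$ may be indefinite, the classical Ambrosetti--Rabinowitz manipulation $\mu J(u_n)-\langle J'(u_n),u_n\rangle$ cannot by itself yield boundedness of a Cerami sequence $(u_n)$: the quadratic contribution can be arbitrarily negative along eigendirections associated with negative eigenvalues. The shifted AR-type inequality in $(iii)$ (applied to $f(x,t)-A(x)t$) provides an initial estimate, while the companion polynomial lower bound $F(x,t)\geq c|t|^{\tilde\mu}-d(x)$ with $\tilde\mu>2$ is precisely what is needed to dominate the indefinite contributions and obtain a uniform bound on $\|u_n\|$. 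Strong convergence of a subsequence then follows in the standard way from the compact embedding $\mathbb{X}(\Omega)\hookrightarrow L^r(\Omega)$ for $r\in(2,2^*)$ and the Hilbert structure of $\mathbb{X}(\Omega)$.

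For the first case $\overline{\Theta}(x)<\lambda_1$, I verify mountain--pass geometry. Combining the uniform upper bound on $\overline{\Theta}$ with $(ii)$ gives $F(x,t)\leq (\lambda_1-\varepsilon)t^2/2+C|t|^r$ for some small $\varepsilon>0$. Expanding $u$ in the orthonormal basis of eigenfunctions of $\mathcal{L}_\alpha$ and using the spectral gap yields $\tfrac{1}{2}\langle \mathcal{L}_\alpha u,u\rangle - \tfrac{\lambda_1-\varepsilon}{2}\|u\|_{L^2}^2\geq \delta\|u\|^2$, and the Sobolev embedding absorbs the $L^r$-correction for small $u$, giving $J(u)\geq\rho>0$ on a small sphere about the origin. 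Meanwhile, the superlinear lower bound $F\geq c|t|^{\tilde\mu}-d$ with $\tilde\mu>2$ directly supplies a direction $e$ along which $J(te)\to-\infty$ as $t\to+\infty$, so Mountain Pass combined with the Cerami condition yields a nontrivial critical point.

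In the second case, I apply the Linking Theorem with the $L^2$-orthogonal decomposition $\mathbb{X}(\Omega)=V_k\oplus W_k$, where $V_k=\mathrm{span}\{e_1,\dots,e_k\}$. The inequality $\overline{\Theta}(x)<\lambda_{k+1}$ together with $(ii)$ and the spectral gap on $W_k$ gives $J\geq\rho>0$ on $\partial B_r\cap W_k$ for $r>0$ sufficiently small. The assumption \eqref{lastassumption} together with the bound $\langle \mathcal{L}_\alpha v,v\rangle\leq\lambda_k\|v\|_{L^2}^2$ on $V_k$ yields $J(v)\leq 0$ for every $v\in V_k$. Finally, fixing any $e\in W_k$ with $\|e\|=1$, the equivalence of all norms on the finite-dimensional space $V_k\oplus \mathbb{R} e$ combined once more with $F\geq c|t|^{\tilde\mu}-d$ shows $J(v+te)\to-\infty$ as $\|v+te\|\to\infty$, so choosing a sufficiently large cylinder provides the boundary condition required by the Linking Theorem. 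Together with the Cerami condition, this produces the desired critical point.
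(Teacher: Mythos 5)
Your plan follows the paper's route closely (Mountain Pass/Linking plus a compactness condition verified via the shifted Ambrosetti--Rabinowitz structure in \eqref{(iii)} and the lower bound $F\geq c|t|^{\tilde\mu}-d$), but it glosses over the two places where the nonpositivity of $\mathcal{L}_\alpha$ makes the argument genuinely nonstandard, and in one of them the justification you give would not actually work.

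First, the geometric lower bound on the ``upper'' subspace. You write that expanding $u$ in the eigenbasis and using the spectral gap yields $\tfrac12\mathcal{B}_\alpha(u,u)-\tfrac{\lambda_1-\varepsilon}{2}\|u\|_{L^2}^2\geq \delta\|u\|_{\mathbb{X}}^2$ (and similarly on $W_k$ with $\lambda_{k+1}$). Spectral expansion alone only gives $\mathcal{B}_\alpha(u,u)-(\lambda_1-\varepsilon)\|u\|_{L^2}^2\geq \varepsilon\|u\|_{L^2}^2$, which is an $L^2$ lower bound, not an $\mathbb{X}$-norm lower bound, and the $L^2$ bound cannot absorb the $C\|u\|_{\mathbb{X}}^r$ correction coming from the subcritical term. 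To upgrade to $\delta\|u\|_{\mathbb{X}}^2$ you need an extra ingredient: either the G\aa rding-type estimate $\mathcal{B}_\alpha(u,u)+\gamma\|u\|_{L^2}^2\geq\tfrac12\|u\|_{\mathbb{X}}^2$ from \eqref{controlBalpha} combined with a convex combination, or, as the paper does, a dedicated compactness-and-contradiction argument (Lemma \ref{lemma_claim}). As written, this step of your proof does not close.

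Second, the boundedness of Palais--Smale (or Cerami) sequences. You correctly identify the difficulty (the classical AR manipulation is defeated by the indefinite quadratic form) and correctly identify the cure ($F\geq c|t|^{\tilde\mu}-d$ with $\tilde\mu>2$), but ``dominate the indefinite contributions'' is not an argument. The actual mechanism in the paper is more delicate: one normalizes $w_j=u_j/\|u_j\|_{\mathbb{X}}\rightharpoonup u$, considers $\eta\mathcal{J}(u_j)-\mathcal{J}'(u_j)u_j$ with $\eta\in(2,\mu)$, rescales in two different ways (by $\|u_j\|_{\mathbb{X}}^{2}$ and by $\|u_j\|_{\mathbb{X}}^{\tilde\mu}$), and extracts from the two resulting limits the contradictory statements $1+\alpha[u]_s^2\leq 0$ (so $u\neq 0$ when $\alpha<0$) and $u=0$. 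Without something of this form your claim of boundedness is unsupported; you should either reproduce this normalization/rescaling argument or supply an equivalent one.

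Minor remarks: you decompose $\mathbb{X}(\Omega)$ $L^2$-orthogonally, the paper uses the $\mathcal{B}_\alpha$-orthogonal complement $\mathbb{P}_{k+1}$; these coincide when no $\lambda_j$ vanishes, but the $\mathcal{B}_\alpha$-orthogonal version is what makes \eqref{eq:Stima1}--\eqref{eq:Stima2} available. Also, your linking set should be a bounded cylinder $(\overline{B}_\rho\cap H_k)\oplus\{tu_{k+1}:t\in[0,\rho]\}$ rather than all of $V_k\oplus\mathbb{R}e$, as in the paper, so that the boundary estimate can use both $\mathcal{J}\leq 0$ on $H_k$ and $\mathcal{J}(u+tu_{k+1})\to-\infty$.
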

We stress that, despite being slightly non-standard, the latter assumption \eqref{lastassumption} is satisfied in the model case previously mentioned.
\medskip

The paper is organized as follows: in section \ref{sec.NotPrel} we collect all the assumptions and preliminary results, including a description of the variational spectrum  (Proposition \ref{prop.Eigenvalue}). In section \ref{sec.AsyLinear} we prove Theorem \ref{thmASY}, while Theorem \ref{thmSL} is proved in section \ref{sec.Superlinear}.
\medskip

\section{Assumptions, notations and preliminary results} \label{sec.NotPrel}
Let $\Omega\subseteq\R^n$, $n>2$, be a connected and bounded open set with $C^1$-smooth boundary
 $\de\Omega$.
We define the space of solutions of problem \eqref{eq:Problem} as
\begin{equation*}
\mathbb{X}(\Omega) := \big\{u\in H^{1}(\R^n):\,
\text{$u\equiv 0$ a.e.\,on $\R^n\setminus\Omega$}\big\}.
\end{equation*}
Thanks to the regularity assumption on $\de\Omega$ (see \cite[Proposition 9.18]{Brezis}), we can identify the space $\mathbb{X}(\Omega)$ with the space $H_0^{1}(\Omega)$ in the following sense:
\begin{equation} \label{eq.identifXWzero}
    u \in H_0^{1}(\Omega)\,\,\Longleftrightarrow\,\,
    u\cdot\mathbf{1}_{\Omega}\in\mathbb{X}(\Omega)\,,
\end{equation}
where $\mathbf{1}_\Omega$ is the indicator function of $\Omega$.
From now on, we shall always identify a function $u\in H_0^{1}(\Omega)$ with $\hat{u} := u\cdot\mathbf{1}_\Omega\in\mathbb{X}(\Omega)$.

By the Poincar\'e inequality and \eqref{eq.identifXWzero}, we get that the quantity
\[
\|u\|_{\mathbb{X}} :=\left( \int_{\Omega}|\nabla u|^2\, dx\right)^{1/2},\quad u\in\mathbb{X}(\Omega)\,,\]
endows $\mathbb{X}(\Omega)$ with a structure of (real) Banach space, which is isometric to $H_0^{1}(\Omega)$.
To fix the notation, we denote by $\langle \cdot,\cdot \rangle_{\mathbb{X}}$ the scalar product which induces the above norm on $\mathbb{X}(\Omega)$.
We briefly recall that the space $\mathbb{X}(\Omega)$ is separable and reflexive, $C_0^\infty(\Omega)$ is dense in $\mathbb{X}(\Omega)$ and eventually that $\mathbb{X}(\Omega)$ compactly embeds in $L^{2}(\Omega)$ and in 
\[
H^s_0(\Omega):=\left\{H^{s}(\R^n):\,\text{$u\equiv 0$ a.e.\,on $\R^n\setminus\Omega$}\right\}
\]
by \cite[Theorem 16.1]{lions}.
\begin{definition}\label{eq:DefWeakSol}
A function $u \in \mathbb{X}(\Omega)$ is called a weak solution of \eqref{eq:Problem} if
\begin{equation*}
    \int_{\Omega}\langle \nabla u, \nabla \varphi\rangle\, dx +\alpha \iint_{\mathbb{R}^{2n}}\dfrac{(u(x)-u(y))(\varphi(x)-\varphi(y))}{|x-y|^{n+2s}}\, dxdy = \int_{\Omega}f(x,u)\varphi \, dx
\end{equation*}
for every $\varphi \in \mathbb{X}(\Omega)$.
\end{definition}
As usual, weak solutions of \eqref{eq:Problem} can be found as critical points of the functional $\mathcal{J}:\mathbb{X}(\Omega) \to \mathbb{R}$ defined as
\begin{equation*}
\mathcal{J}(u) := \dfrac{1}{2}\int_{\Omega}|\nabla u|^2 \, dx + \dfrac{\alpha}{2}\iint_{\mathbb{R}^{2n}} \dfrac{|u(x)-u(y)|^2}{|x-y|^{n+2s}}\, dxdy - \int_{\Omega}F(x,u)\, dx\,.
\end{equation*}
Here
\begin{equation*}
F(x,t) := \int_{0}^{t}f(x,\sigma)\, d\sigma, \quad t \in \mathbb{R}.
\end{equation*}
The functional $\mathcal{J}$ is Fr\'{e}chet differentiable and
\begin{align*}
\mathcal{J}'(u)(\varphi) &= \int_{\Omega}\langle \nabla u, \nabla \varphi\rangle \, dx + \alpha \iint_{\mathbb{R}^{2n}}\dfrac{(u(x)-u(y))(\varphi(x)-\varphi(y))}{|x-y|^{n+2s}}\, dxdy \\
&\quad- \int_{\Omega}f(x,u(x))\varphi(x)\, dx \quad \textrm{for every }\varphi \in \mathbb{X}(\Omega)\,.
\end{align*}
\begin{definition}
Consider the bilinear form 
$\mathcal{B}_{\alpha}:\mathbb{X}(\Omega)\times \mathbb{X}(\Omega) \to \mathbb{R}$, defined by
\begin{equation*}
\mathcal{B}_{\alpha}(u,v):= \int_{\Omega}\langle \nabla u, \nabla v\rangle\, dx +\alpha \iint_{\mathbb{R}^{2n}}\dfrac{(u(x)-u(y))(v(x)-v(y))}{|x-y|^{n+2s}}\, dxdy
\end{equation*}
for any $u,v \in \mathbb{X}(\Omega)$. We say that $u$ and $v$ are $\mathcal{B}$-orthogonal if
\begin{equation*}
\mathcal{B}_{\alpha}(u,v)=0.
\end{equation*}
\end{definition}
The terminology adopted above is justified by the fact that, for $\alpha>0$, the bilinear form $\mathcal{B}_{\alpha}$ becomes a true scalar product.

We conclude this section dealing with the eigenvalue problem associated to the operator $\LL$, that is the following boundary value problem
\begin{equation}\label{eq.EigenvalueProblem}
\left\{ \begin{array}{rl}
  \LL u = \lambda u\,, & \textrm{in } \Omega\\
  u= 0\,, & \textrm{in } \mathbb{R}^n \setminus \Omega
\end{array}\right.
\end{equation}
where $\lambda \in \mathbb{R}$.
According to Definition \ref{eq:DefWeakSol}, we give the following definition.
\begin{definition}
A number $\lambda \in \mathbb{R}$ is called a (variational) eigenvalue of $\LL$ if there exists a weak solution $u\in \mathbb{X}(\Omega)$ of \eqref{eq.EigenvalueProblem} or, equivalently, if
\begin{equation*}
\int_{\Omega}\langle \nabla u, \nabla \varphi\rangle\, dx +\alpha \iint_{\mathbb{R}^{2n}}\dfrac{(u(x)-u(y))(\varphi(x)-\varphi(y))}{|x-y|^{n+2s}}\, dxdy =\lambda \int_{\Omega}u\varphi \, dx
\end{equation*}
for every $\varphi \in \mathbb{X}(\Omega)$.
If such function $u$ exists, we call it eigenfunction corresponding to the eigenvalue $\lambda$.
\end{definition}

The next result permits a complete description of the eigenvalues and related eigenfunctions of $\LL$.

\begin{proposition}\label{prop.Eigenvalue}
The following statements hold true:
\begin{itemize}
\item[(a)] $\LL$ admits a divergent, but bounded from below, sequence of eigenvalues $\{\lambda_k\}_{k\in \mathbb{N}}$, i.e., there exists $C>0$ such that
\[
-C < \lambda_1 \leq \lambda_2  \leq \ldots \leq \lambda_k \to +\infty\,, \quad  \textrm{as } k \to +\infty.
\]
Moreover, for every $k\in \mathbb{N}$, $\lambda_{k}$ can be characterized as
\begin{equation}\label{eq:lambdaChar}
\lambda_{k} = \min_{\stackrel{u \in \mathbb{P}_{k}}{\|u\|_{L^{2}(\Omega)}=1}} \left\{\int_{\Omega}|\nabla u|^2 \, dx +\alpha \iint_{\mathbb{R}^{2n}}\dfrac{|u(x)-u(y)|^2}{|x-y|^{n+2s}}\, dxdy \right\},
\end{equation}
where 
\begin{equation*}
\mathbb{P}_{1} := \mathbb{X}(\Omega),
\end{equation*}
and, for every $k \geq 2$,
\begin{equation*}
\mathbb{P}_{k} := \left\{ u \in \mathbb{X}(\Omega): \mathcal{B}_{\alpha}(u, u_j)=0 \, \textrm{ for every } j = 1, \ldots,k-1\right\}; 
\end{equation*}
\item[(b)] for every $k\in \mathbb{N}$ there exists an eigenfunction $u_{k} \in \mathbb{X}(\Omega)$ corresponding to $\lambda_{k}$, which realizes the minimum in \eqref{eq:lambdaChar};
\item[(c)] the sequence $\{u_k\}_{k\in \mathbb{N}}$ of eigenfunctions constitutes an orthonormal basis of $L^{2}(\Omega)$. Moreover, the eigenfunctions are $\mathcal{B}$-orthogonal;
\item[(d)] for every $k\in \mathbb{N}$, $\lambda_k$ has finite multiplicity.
\end{itemize}
\begin{proof}
If $\alpha=0$ the result is the classical spectral theorem for the Laplace operator (see e.g. \cite{Brezis}). As already mentioned in the Introduction, the case $\alpha >-\tfrac{1}{C}$ (with $C>0$ being the constant of the continuous embedding $H^1_0 \subset H^s$) is also pretty standard. Therefore, in what follows we assume $\alpha \leq -\tfrac{1}{C}$.

By {\cite[Theorem 1]{interpolation}}, there exists a positive constant $C$ such that
\begin{equation*}
        [u]^2_s\leq C\|u\|^{2(1-s)}_{L^2(\Omega)}\|u\|^{2s}_{H^1(\Omega)}
    \end{equation*}
    for any $u\in\mathbb{X}(\Omega)$. We remind that the optimal constant $C$ can be explicitly computed with the help of Fourier transform (see e.g. \cite{CCMV}).
    
We now combine the previous interpolation estimate with the Young inequality (with exponents $\frac{1}{s}$ and $\frac{1}{1-s}$), i.e. for any $\varepsilon\in\mathbb{R}^+$ there exist positive constants $c_{\varepsilon},c_1,c_2$, depending on $s$ and $\varepsilon$, such that
    \begin{equation*}
    \begin{split}
        |\alpha|[u]^2_s&\leq |\alpha|C\left((1-s)c_\varepsilon\|u\|^2_{L^2(\Omega)}+s\varepsilon\|u\|^2_{H^1(\Omega)}\right)\\
        &=|\alpha|c_1\varepsilon\|u\|^2_{\mathbb{X}(\Omega)}+|\alpha|c_2\|u\|^2_{L^2(\Omega)}
    \end{split}
    \end{equation*}
    for any $u\in\mathbb{X}(\Omega)$.
    Therefore,  by choosing $\varepsilon=\frac{1}{2c_1|\alpha|}$, we get
    \begin{equation*}
        |\alpha|[u]^2_s\leq\frac{1}{2}\|u\|^2_{\mathbb{X}(\Omega)}+\gamma\|u\|^2_{L^2(\Omega)}
    \end{equation*}
    for any $u\in\mathbb{X}(\Omega)$, where $\gamma$ only depends on $s,\varepsilon$ and $\alpha$, and so
    \begin{equation}\label{controlBalpha}
    \mathcal{B}_\alpha(u,u)+\gamma\|u\|^2_{L^2(\Omega)}\geq\frac{1}{2}\|u\|^2_{\mathbb{X}(\Omega)}\quad\text{for any }u\in\mathbb{X}(\Omega)\,.
    \end{equation}

At this point, in a standard fashion (for instance, see \cite[Chapter 6]{Evans}) one can prove the existence of  an increasing sequence of eigenvalues $\{\lambda_k\}_k$  of $\LL$, with $\lambda_k\to\infty$ as $k\to\infty$,  such that every $\lambda_k$ has finite multiplicity and is given by the variational characterization in \eqref{eq:lambdaChar}. Moreover, if $e_k$ is the eigenfunction associated to $\lambda_k$, we have that  $\{e_k\}_k$ is an orthonormal basis of $L^{2}(\Omega)$ with 
\begin{equation*}
\mathcal{B}_\alpha(e_k,e_j)=\lambda_k(e_k,e_j)_{L^{2}(\Omega)}=0 \quad\text{for every }j\neq k\,,
\end{equation*}
that is, $e_k$ and $e_j$ are also $\mathcal{B}_\alpha$-orthogonal.

\end{proof}
\end{proposition}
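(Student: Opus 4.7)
My plan is to reduce to the nontrivial regime $\alpha\leq -\tfrac{1}{C}$ and, in that regime, establish that a suitable shift of $\mathcal{B}_\alpha$ by a multiple of the $L^2$-norm is coercive on $\mathbb{X}(\Omega)$. Once this is in place, the whole proposition reduces to the classical spectral theorem for compact self-adjoint operators on a Hilbert space.

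\textbf{Step 1: Shifted coercivity.} The first step is the inequality
\[
\mathcal{B}_\alpha(u,u)+\gamma\|u\|_{L^2(\Omega)}^2\geq \tfrac{1}{2}\|u\|_{\mathbb{X}}^2
\]
for some $\gamma>0$ depending on $s$ and $\alpha$. When $\alpha<0$ the Gagliardo seminorm enters with the wrong sign, so one needs the interpolation inequality $[u]_s^2\leq C\|u\|_{L^2(\Omega)}^{2(1-s)}\|u\|_{H^1(\Omega)}^{2s}$ followed by Young's inequality with conjugate exponents $1/s$ and $1/(1-s)$, choosing the parameter small enough to absorb the $\|u\|_{\mathbb{X}}^2$ term on the left. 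This is the only genuinely nontrivial ingredient, and it is where the assumption that $\Omega$ is bounded (so that $\|u\|_{H^1(\Omega)}$ is controlled by $\|u\|_{\mathbb{X}}$ via Poincar\'e) is used.

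\textbf{Step 2: Compact self-adjoint resolvent.} Define the shifted form $\tilde{\mathcal{B}}(u,v):=\mathcal{B}_\alpha(u,v)+\gamma(u,v)_{L^2(\Omega)}$, which is symmetric, continuous on $\mathbb{X}(\Omega)$ and, by Step 1, coercive. Lax--Milgram then associates with each $f\in L^2(\Omega)$ a unique $u=Tf\in\mathbb{X}(\Omega)$ solving $\tilde{\mathcal{B}}(u,\varphi)=(f,\varphi)_{L^2(\Omega)}$ for all $\varphi\in\mathbb{X}(\Omega)$. Viewed as $T\colon L^2(\Omega)\to L^2(\Omega)$, this operator is compact, since $\mathbb{X}(\Omega)$ embeds compactly into $L^2(\Omega)$, and self-adjoint by symmetry of $\tilde{\mathcal{B}}$. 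The Hilbert--Schmidt spectral theorem delivers an orthonormal basis $\{e_k\}$ of $L^2(\Omega)$ of eigenfunctions of $T$ with positive eigenvalues $\mu_k\to 0^+$, each of finite multiplicity. Setting $\lambda_k:=1/\mu_k-\gamma$ translates $Te_k=\mu_ke_k$ into the weak eigenvalue equation $\mathcal{B}_\alpha(e_k,\varphi)=\lambda_k(e_k,\varphi)_{L^2(\Omega)}$, immediately giving $\lambda_k\to+\infty$, the lower bound $\lambda_k>-\gamma$, finite multiplicity, and, from the $L^2$-orthogonality of distinct eigenfunctions, their $\mathcal{B}_\alpha$-orthogonality.

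\textbf{Step 3: Variational characterization.} I would obtain \eqref{eq:lambdaChar} inductively: $\lambda_1$ is the infimum of the Rayleigh quotient $\mathcal{B}_\alpha(u,u)/\|u\|_{L^2(\Omega)}^2$ on $\mathbb{X}(\Omega)$, which is bounded below by Step~1 and attained via the direct method (weak convergence plus the compact embedding handle the $L^2$-term, while the shifted form is weakly lower semicontinuous as a square norm for an equivalent inner product); the corresponding minimizer solves the Euler--Lagrange equation, hence is an eigenfunction with eigenvalue $\lambda_1$. For $k\geq 2$, the same argument applied on the $\mathcal{B}_\alpha$-orthogonal complement $\mathbb{P}_k$ produces $\lambda_k$ and $u_k$, and the fact that $\{e_k\}$ already exhausts $L^2(\Omega)$ shows that the minima over $\mathbb{P}_k$ coincide with the $\lambda_k$ found in Step~2. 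The main obstacle is Step~1: without it, $\mathcal{B}_\alpha$ loses sign control when $\alpha$ is very negative, and none of the Hilbert-space machinery applies directly.
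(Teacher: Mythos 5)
Your proposal is correct and follows essentially the same route as the paper: the crucial nontrivial ingredient is the shifted coercivity estimate obtained by combining the Gagliardo--Nirenberg interpolation inequality with Young's inequality, and both you and the paper then pass to the standard compact-resolvent/Courant--Fischer machinery. The only difference is one of exposition: where the paper invokes ``a standard fashion (see Evans, Chapter~6)'', you explicitly unpack the Lax--Milgram, compact self-adjoint resolvent, and inductive Rayleigh-quotient arguments.
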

By \cite[Theorem 5.2.4]{Frank} and Proposition \ref{prop.Eigenvalue} (a), we can also infer the existence of a positive integer $N_0 \in \mathbb{N}$ such that $\lambda_{N_0}$ is the first (not necessarily simple) positive eigenvalue. Of course, $\lambda_k >0$ for every $k > N_0$.\\
We further notice that
\begin{equation}\label{eq:Stima1}
\lambda_{k+1} \int_{\Omega}u^2 \, dx \leq \int_{\Omega}|\nabla u|^2 \, dx +\alpha \iint_{\mathbb{R}^{2n}}\dfrac{|u(x)-u(y)|^2}{|x-y|^{n+2s}}\, dxdy
\end{equation}
for every $u \in \textrm{span}(u_1, \ldots, u_k)^{\perp} = \mathbb{P}_{k+1}$ and
\begin{equation}\label{eq:Stima2}
\int_{\Omega}|\nabla u|^2 \, dx +\alpha \iint_{\mathbb{R}^{2n}}\dfrac{|u(x)-u(y)|^2}{|x-y|^{n+2s}}\, dxdy \leq \lambda_k \int_{\Omega}u^2 \, dx
\end{equation}
for every $u \in \textrm{span}(u_1, \ldots, u_k)=: H_k$.

While \eqref{eq:Stima1} directly follows from the variational characterization \eqref{eq:lambdaChar}, the latter \eqref{eq:Stima2} can be proved as follows: by assumption, let $u = \sum_{i=1}^{k}c_i u_i$. Then,
\begin{equation*}
\begin{aligned}
    \int_{\Omega}|\nabla u|^2 \, dx +\alpha \iint_{\mathbb{R}^{2n}}\dfrac{|u(x)-u(y)|^2}{|x-y|^{n+2s}}\, dxdy &= \mathcal{B}_{\alpha}(u,u) =\mathcal{B}_{\alpha}(\sum_{i=1}^{k}c_i u_i, \sum_{j=1}^{k}c_j u_j) \\
    &= \sum_{i,j=1}^{k}c_i c_j \mathcal{B}_{\alpha}(u_i,u_j) =\sum_{i=1}^{k}c_i^2 \mathcal{B}_{\alpha}(u_i,u_i) \\
    &= \sum_{i=1}^{k}c_i^2 \lambda_i \int_{\Omega}u_i^2 \, dx \leq \lambda_k \int_{\Omega}u^2 \, dx.
\end{aligned}
\end{equation*}


\section{The asymptotically linear case}\label{sec.AsyLinear}
In this section we prove Theorem \ref{thmASY}.
\begin{proof}[Proof of Theorem \ref{thmASY}]
{\bf Case 1}: $\overline{v}(x)<\lambda_{1}$ for a.e. $x \in \Omega$.
\medskip

We claim that 
\begin{equation}\label{eq.ClaimWeierstrass}
\liminf_{\|u\|_{\mathbb{X}}\to +\infty}\dfrac{\mathcal{J}(u)}{\|u\|^2_{\mathbb{X}}} >0\,.
\end{equation}
Once this is established, we have that functional $\mathcal{J}$ is coercive and sequentially weakly lower semicontinuous in $\X$, since
\begin{itemize}
\item the map $u \mapsto \|u\|_{\mathbb{X}}$ is sequentially weakly l.s.c. in $\mathbb{X}(\Omega)$ (being a norm);
\item the map $u \mapsto [u]_s$ is continuous, because $\X$ compactly embeds in $H^s_0(\Omega)$;
\item the map $u\mapsto \int_{\Omega}F(x,u)\, dx$ is continuous by $\eqref{f_{lg}}$.
\end{itemize}

By the very definition of $\overline{v}$, for every $\varepsilon>0$ there exists $R>0$ such that
\begin{equation}\label{eq.stimadav}
f(x,\sigma) < (\overline{v}(x) + \varepsilon) \sigma \quad \textrm{for every } |\sigma|>R\,.
\end{equation}
On the other hand, assumption $\eqref{f_{lg}}$ readily gives that 
\begin{equation}\label{eq.stimadaflg}
f(x,\sigma) \leq a(x) + bR \quad \textrm{for every } |\sigma|\leq R\,.
\end{equation}
Combining \eqref{eq.stimadav} with \eqref{eq.stimadaflg}, we then get that
\[
\limsup_{|\sigma|\to +\infty} \dfrac{F(x,\sigma)}{\sigma^2} \leq \dfrac{\overline{v}(x)+\varepsilon}{2} \quad \textrm{for every } \varepsilon>0
\]
and, passing to the limit as $\varepsilon \to 0^+$, we finally get that
\begin{equation}\label{eq.stimaLimSup}
\limsup_{|\sigma|\to +\infty} \dfrac{F(x,\sigma)}{\sigma^2} \leq \dfrac{\overline{v}(x)}{2}\,.
\end{equation}
Let us now proceed with the proof of \eqref{eq.ClaimWeierstrass}.
We take a sequence $\{u_j\}_j \subset \mathbb{X}(\Omega)$ such that
$\|u_j\|_{\mathbb{X}} \to +\infty$ as $j \to +\infty$ and we define the normalized sequence $w_j := \tfrac{u_j}{\|u_j\|_{\mathbb{X}}}$. Then, possibly passing to a subsequence, we can assume the existence of a function $u_0 \in \mathbb{X}(\Omega)$ such that $w_j \to u_0$ weakly in $\mathbb{X}(\Omega)$, strongly in $L^{2}(\Omega)$ and a.e. in $\Omega$. Moreover, $\|u_0\|_{\mathbb{X}}\leq 1$ and, by $\eqref{f_{lg}}$, it holds that
\[
\dfrac{F(x,u_j)}{\|u_j\|^{2}_{\mathbb{X}}} \leq \dfrac{a(x)|u_j| + b \tfrac{|u_j|^2}{2}}{\|u_j\|^{2}_{\mathbb{X}}} \to \dfrac{b}{2}u_{0}^{2}(x) \quad \textrm{in } L^{1}(\Omega)\,. 
\]
Now, write
\[
\Omega = \left\{ x\in \Omega: u_j(x) \textrm{ is bounded }\right\} \cup \left\{ x\in \Omega: |u_j(x)|\to +\infty \right\}=: \Omega_1 \cap \Omega_2\,.
\]
Of course, if $x \in \Omega_1$, then 
\begin{equation}\label{eq.inOmega1}
\lim_{j\to +\infty}\dfrac{F(x,u_j)}{\|u_j\|^2_{\mathbb{X}}} = 0
\end{equation}
while, if $x\in \Omega_2$, thanks to \eqref{eq.stimaLimSup}, we get
\begin{equation}\label{eq.inOmega2}
\limsup_{j\to +\infty}\dfrac{F(x,u_j)}{\|u_j\|^2_{\mathbb{X}}} = \limsup_{j\to +\infty} \dfrac{F(x,u_j)}{u_j^{2}(x)} \, \dfrac{u_j^{2}(x)}{\|u_j\|_{\mathbb{X}}^2} \leq \dfrac{\overline{v}(x)}{2}u_0^{2}(x)\,.
\end{equation}
By the generalized Fatou Lemma, combined with \eqref{eq.inOmega1} and \eqref{eq.inOmega2}, we have that
\begin{equation}\label{eq:StimaFconv}
\limsup_{j\to +\infty}\int_{\Omega}\dfrac{F(x,u_j)}{\|u_j\|^2_{\mathbb{X}}}\, dx \leq \int_{\Omega}\limsup_{j\to +\infty} \dfrac{F(x,u_j)}{\|u_j\|^2_{\mathbb{X}}}\, dx \leq \int_{\Omega}\dfrac{\overline{v}(x)}{2}u_0^{2}(x)\, dx\,. 
\end{equation}
On the other hand, by the definition of $\lambda_1$, we have
\begin{equation}\label{eq:StimaConk=0}
\begin{aligned}
\dfrac{1}{\|u_j\|_{\mathbb{X}}^2} &\left( \dfrac{1}{2}\int_{\Omega}|\nabla u_j|^2 \, dx + \dfrac{\alpha}{2}\iint_{\mathbb{R}^{2n}}\dfrac{|u_j(x)-u_j(y)|^2}{|x-y|^{n+2s}}\, dxdy \right) \\
&\geq \dfrac{\lambda_1}{2}\int_{\Omega}\dfrac{u_j^2(x)}{\|u_j\|^2_{\mathbb{X}}}\, dx \to \dfrac{\lambda_1}{2}\int_{\Omega}u_0^2(x) \, dx\,, \quad \textrm{as $j \to +\infty$}.
\end{aligned}
\end{equation}
To prove the validity of \eqref{eq.ClaimWeierstrass}, we have to consider two possible situations: either $u_0 \neq 0$ or $u_0 = 0$ a.e. in $\Omega$. If the first happens to be true, then we combine \eqref{eq:StimaFconv} and \eqref{eq:StimaConk=0} with the standing assumption $\overline{v}<\lambda_1$ (a.e. in $\Omega$), getting that
\[
\liminf_{j\to +\infty}\dfrac{\mathcal{J}(u_j)}{\|u_j\|^2_{\mathbb{X}}} \geq \int_{\Omega}\left(\dfrac{\lambda_1}{2}  -\dfrac{\overline{v}(x)}{2}\right)u_0^2(x)\,dx >0\,.
\]
On the contrary, if $u_0 = 0$ a.e. in $\Omega$, we notice that, by compactness,
\[
\dfrac{1}{\|u_j\|^2_{\mathbb{X}}} \iint_{\mathbb{R}^{2n}}\dfrac{|u_j(x)-u_j(y)|^2}{|x-y|^{n+2s}}\, dxdy \to 0\,, \quad \textrm{as }j \to +\infty\,,
\]
and therefore we have that
\[
\liminf_{j\to +\infty}\dfrac{\mathcal{J}(u_j)}{\|u_j\|^2_{\mathbb{X}}} \geq \dfrac{1}{2}\,.
\]
In any case, \eqref{eq.ClaimWeierstrass} holds and an application of the Weierstrass Theorem closes the proof of Case 1.
\medskip

{\bf Case 2}: there exists $k \in \mathbb{N}$ such that $\lambda_k < \underline{v}(x) \leq \overline{v}(x) < \lambda_{k+1}$ for a.e. $x \in \Omega$.
\medskip

Note that, reasoning as in the proof of \eqref{eq.stimaLimSup} in Case 1, we have that
\begin{equation}\label{eq:stimaLimInf}
\liminf_{|\sigma|\to +\infty} \dfrac{F(x,\sigma)}{\sigma^2} \geq \dfrac{\underline{v}(x)}{2}\, \quad \textrm{for a.e. }x \in \Omega\,.
\end{equation}
Take a sequence $\{u_j\}_j \subset H_k$ (recall its definition in \eqref{eq:Stima2}) such that $\|u_j\|_{\mathbb{X}} \to +\infty$ as $j \to +\infty$. Since $H_k$ is finite dimensional, we can infer the existence of a function $u_0 \in H_k$, with $\|u_0\|_{\mathbb{X}} =1$, and such that the normalized sequence $\tfrac{u_j}{\|u_j\|_{\mathbb{X}}} \to u_0$ strongly in $\mathbb{X}(\Omega)$, strongly in $L^2(\Omega)$ and a.e. in $\Omega$.
Exploiting now \eqref{eq:Stima2}, combined with the standing assumption on $\lambda_k$ and \eqref{eq:stimaLimInf}, we get
\[
\limsup_{j \to +\infty}\dfrac{\mathcal{J}(u_j)}{\|u_j\|^2_{\mathbb{X}}} \leq \int_{\Omega}\left(\frac{\lambda_k}{2} - \dfrac{\underline{v}(x)}{2}\right) u_0^2(x) \,dx < 0\,,
\]
which implies that
\begin{equation}\label{eq:GeometrySaddlePoint1}
\limsup_{u \in H_k,\, \|u\|_{\mathbb{X}}\to +\infty}\dfrac{\mathcal{J}(u)}{\|u\|^2_{\mathbb{X}}} <0\,.
\end{equation}
Moreover, once again mimicking the argument adopted in Case 1, we have that
\begin{equation}\label{eq:GeometrySaddlePoint2}
\liminf_{u \in \mathbb{P}_{k+1}, \, \|u\|_{\mathbb{X}}\to +\infty}\dfrac{\mathcal{J}(u)}{\|u\|^2_{\mathbb{X}}} >0\,.
\end{equation}

It follows from \eqref{eq:GeometrySaddlePoint2} that for any positive constant $M >0$, there exists a positive constant $R>0$ such that if $u \in \mathbb{P}_{k+1}$ with $\|u\|_{\mathbb{X}} \geq R$, then $\mathcal{J}(u)\geq M$. On the other hand, if $u \in \mathbb{P}_{k+1}$ with $\|u\|_{\mathbb{X}} \leq R$, a direct application of $\eqref{f_{lg}}$, \eqref{eq:Stima1}, H\"{o}lder's and Poincar\'e's inequalities gives
\begin{equation*}
\mathcal{J}(u) \geq \frac{\lambda_{k+1}}{2}\|u\|^2_{L^{2}(\Omega)} - \|a\|_{L^{2}(\Omega)}\|u\|_{L^{2}(\Omega)} - \dfrac{b}{2}\|u\|^2_{L^{2}(\Omega)} \geq -C\,,
\end{equation*}
where $C=C(R,\Omega,\|a\|_{L^2(\Omega)},b)>0$ is a positive constant. 
Therefore, we have that
\begin{equation*}
\mathcal{J}(u) \geq M - C, \quad \textrm{for every $u \in \mathbb{P}_{k+1}$}.
\end{equation*}
On the other hand, by \eqref{eq:GeometrySaddlePoint1}, we can choose a positive number $T>0$ such that
\begin{equation*}
\sup_{u\in H_k, \, \|u\|_{\mathbb{X}}=T}\mathcal{J}(u) < M - C\,.
\end{equation*}
It readily implies that
\begin{equation*}
\sup_{u\in H_k, \, \|u\|_{\mathbb{X}}=T}\mathcal{J}(u) < M - C \leq \inf_{u \in \mathbb{P}_{k+1}}\mathcal{J}(u)\,,
\end{equation*}
which in turn proves the validity of the topological requirements
to apply the Saddle Point Theorem.

It now remains to show the validity of the Palais-Smale condition.
Since the space $\mathbb{X}(\Omega)$ compactly embeds in $L^{2}(\Omega)$, it is enough to prove that the Palais-Smale sequences are bounded.  Arguing by contradiction, assume that the sequence $\{u_j\}_j$ is unbounded, define the normalized sequence $w_j := \tfrac{u_j}{\|u_j\|_{\mathbb{X}}}$ and assume the existence of $u_0 \in\mathbb{X}(\Omega)$ such that $w_j \to u_0$ (up to subsequences) as $j\to +\infty$, weakly in $\mathbb{X}(\Omega)$, strongly in $L^{2}(\Omega)$ and a.e. in $\Omega$.
Since $\{\mathcal{J}'(u_j)\}_j$ is bounded, we can infer the existence of a positive constant $M>0$ such that
\begin{equation}\label{eq:J'bdd}
\dfrac{|\mathcal{J}'(u_j)(\varphi)|}{\|u_j\|_{\mathbb{X}}} = \left| \mathcal{B}_{\alpha}(w_j,\varphi) - \int_{\Omega}\dfrac{f(x,u_j)}{\|u_j\|_{\mathbb{X}}}\varphi \, dx \right| \leq M \dfrac{\|\varphi\|_{\mathbb{X}}}{\|u_{j}\|_{\mathbb{X}}} \quad \textrm{ for every } \varphi \in \mathbb{X}(\Omega)\,.
\end{equation}
Recalling the standing assumption $\eqref{f_{lg}}$ on $f$, we have that
\begin{equation}\label{eq:StimaPerLG}
\dfrac{|f(x,u_j)|}{\|u_j\|_{\mathbb{X}}} \leq \dfrac{a(x)}{\|u_j\|_{\mathbb{X}}} + b \dfrac{|u_j|}{\|u_j\|_{\mathbb{X}}}\,,
\end{equation}
and we notice that the r.h.s. of \eqref{eq:StimaPerLG} is bounded in $L^{2}(\Omega)$. Therefore, there exists $\beta \in L^{2}(\Omega)$ such that (up to subsequences)
\[
\frac{f(x,u_j)}{\|u_j\|_{\mathbb{X}}}\quad\text{weakly converges to }\beta\text{ in }L^{2}(\Omega)\,,\text{ as }j\to+\infty\,.
\]
\medskip

\noindent\textbf{Claim A}: there exists a measurable function $m:\Omega \to \mathbb{R}$ such that
\begin{itemize}
\item[$(i)$] $\beta(x) = m(x) u_0(x)$ for a.e. $x \in \Omega$;
\item[$(ii)$] $\underline{v}(x) \leq m(x)\leq \overline{v}(x)$ for a.e. $x \in \Omega$.
\end{itemize}
\medskip

In order to prove the claim, we first notice that, if $x\in \Omega$ is such that $u_0(x)> 0$, then $u_j = w_j \|u_j\|_{\mathbb{X}}\to +\infty$ and then, recalling \eqref{eq:Defvv},
\begin{equation*}
\liminf_{j\to +\infty}\dfrac{f(x,u_j)}{\|u_j\|_{\mathbb{X}}}= \liminf_{j\to+\infty}\dfrac{f(x,u_j)}{u_j}\dfrac{u_j}{\|u_j\|_{\mathbb{X}}} \geq \underline{v}(x)u_0(x)\,,
\end{equation*}
while
\begin{equation*}
\limsup_{j\to +\infty}\dfrac{f(x,u_j)}{\|u_j\|_{\mathbb{X}}}= \limsup_{j\to+\infty}\dfrac{f(x,u_j)}{u_j}\dfrac{u_j}{\|u_j\|_{\mathbb{X}}} \leq \overline{v}(x)u_0(x)\,.
\end{equation*}
If $x\in \Omega$ is such that $u_0(x)<0$, both inequalities are reversed. 
Now, recall that, if $\{v_j\}_j$ weakly converges to $v$ in $L^2(\Omega)$, $g_j\leq v_j$ and $g_j$ converges to $g$ strongly in $L^2(\Omega)$ and a.e. in $\Omega$, then $g\leq v$ a.e. in $\Omega$. In this way, setting
\[
m(x) :=\begin{cases}
 \dfrac{\beta(x)}{u_0(x)}\,, & \textrm{ if } u_0(x)\neq 0\\
\dfrac{\underline{v}(x)+\overline{v}(x)}{2}\,, & \textrm{ if }  u_0(x) =0
\end{cases}
\]
we complete the proof of $(ii)$, and thus of Claim A, since, if $u_0(x)=0$, then from \eqref{eq:StimaPerLG} we have
\[
\dfrac{|f(x,u_j)|}{\|u_j\|_{\mathbb{X}}} \leq \dfrac{a(x)}{\|u_j\|_{\mathbb{X}}} + b \dfrac{|u_j|}{\|u_j\|_{\mathbb{X}}} \to 0\,, \quad \textrm{as } j\to +\infty\,.
\]

Then, by passing to the limit in \eqref{eq:J'bdd}, we get
\begin{equation}\label{b0}
\mathcal{B}_{\alpha}(u_0, \varphi) - \int_{\Omega}m(x)u_0(x)\varphi(x)\, dx = 0 \quad \textrm{for every } \varphi \in \mathbb{X}(\Omega)\,.
\end{equation}

\noindent\textbf{Claim B}: $u_0 =0$ a.e. in $\Omega$.\\
Since $u_0 \in \mathbb{X}(\Omega)$, we can write 
\[
u_0 = u_1 + u_2, \quad u_1 \in H_k, \, u_2 \in \mathbb{P}_{k+1}.
\]
Now, take $\varphi=u_1$ and then $\varphi=u_2$ in \eqref{b0}. Comparing, we get
\begin{equation}\label{b1}
\mathcal{B}_{\alpha}(u_1,u_1) - \int_{\Omega}m(x)u_1^2(x)\, dx = \mathcal{B}_{\alpha}(u_2,u_2) - \int_{\Omega}m(x)u_2^2(x)\, dx \,.
\end{equation}
Keeping in mind \eqref{eq:Stima1} and \eqref{eq:Stima2}, from \eqref{b1} we find
\[
\begin{aligned}
0 & \geq \int_{\Omega}(\lambda_k - m(x))u_1^2(x)\, dx \geq \mathcal{B}_{\alpha}(u_1,u_1) - \int_{\Omega}m(x)u_1^2(x)\, dx \\
&= \mathcal{B}_{\alpha}(u_2,u_2) - \int_{\Omega}m(x)u_2^2(x)\, dx 
\geq \int_{\Omega}(\lambda_{k+1}-m(x))u_2^2(x) \, dx \geq 0\,,
\end{aligned}
\]
but this is impossible unless $u_1 = u_2 =0$ a.e. in $\Omega$, and this proves Claim B.

We now test \eqref{eq:J'bdd} with
$\varphi = w_j$, getting
\begin{equation*}
\left|1 + \alpha \dfrac{[u_j]_s^2}{\|u_j\|^2_{\mathbb{X}}} - \int_{\Omega}\dfrac{f(x,u_j)}{u_{j}}\dfrac{u_j}{\|u_j\|_{\mathbb{X}}}\right| \leq \dfrac{M}{\|u_j\|_{\mathbb{X}}}\quad\text{for every }j\in\mathbb{N}\,.
\end{equation*}
Passing to the limit as $j\to+\infty$ we finally reach the contradiction ''1=0" and this closes the proof.
\end{proof}


\section{The superlinear and subcritical case}\label{sec.Superlinear}
In this section we prove Theorem \ref{thmSL}. We first need the following preliminary result inspired by Rabinowitz \cite{Rabinowitz78}.

\begin{lemma}\label{lemma_claim}
Let $k\in \N$ be such that
\[
\lambda_{k}\leq\underline{\Theta}(x)\leq\overline{\Theta}(x)<\lambda_{k+1}\quad\text{for a.e. }x\in\Omega\,,
\]
and decompose the space $\mathbb{X}(\Omega)$ as $\mathbb{X}(\Omega)=H_k \oplus \mathbb{P}_{k+1}$, where  $H_k :=\textrm{span}(u_1, \ldots, u_{k})$.
Then, there exists a positive constant $\beta$ such that
\begin{equation}\label{claim}
    \mathcal{B}_\alpha(u,u)-\int_\Omega \overline{\Theta}(x)u^2dx\geq\beta\|u\|_{\mathbb{X}(\Omega)}^2\quad\text{for any }u\in \mathbb{P}_{k+1}
\end{equation}
or, equivalently,
\[
\inf_{u\in \mathbb{P}_{k+1} \setminus\{0\}}\left\{1+\frac{\alpha[u]_s^2-\int_\Omega \overline{\Theta}(x)u^2dx}{\|u\|_{\mathbb{X}(\Omega)}^2}\right\}>0\,.
\]
\end{lemma}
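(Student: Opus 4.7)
The plan is to argue by contradiction. If no such $\beta>0$ exists, a homogeneity/normalization argument produces a sequence $\{u_n\}\subset\mathbb{P}_{k+1}$ with $\|u_n\|_{\mathbb{X}(\Omega)}=1$ and
\[
\mathcal{B}_\alpha(u_n,u_n)-\int_\Omega \overline{\Theta}(x)u_n^2\,dx\longrightarrow 0\quad\text{as }n\to\infty.
\]
By reflexivity of $\mathbb{X}(\Omega)$, up to a subsequence $u_n\rightharpoonup u_0$ in $\mathbb{X}(\Omega)$, and by the compact embeddings recalled in Section \ref{sec.NotPrel} the convergence is strong in both $L^2(\Omega)$ and $H^s_0(\Omega)$. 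The set $\mathbb{P}_{k+1}$ is weakly closed in $\mathbb{X}(\Omega)$, because each defining condition $\mathcal{B}_\alpha(\,\cdot\,,u_j)=0$ is weakly continuous (the local part is a continuous linear functional on $\mathbb{X}(\Omega)$, and the nonlocal part is continuous by the compact embedding into $H^s_0(\Omega)$). Hence $u_0\in\mathbb{P}_{k+1}$.

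Next I would exploit the standing hypothesis $\lambda_k\leq\underline{\Theta}(x)\leq\overline{\Theta}(x)<\lambda_{k+1}$, which forces $\overline{\Theta}\in L^\infty(\Omega)$; combined with strong $L^2$-convergence, this yields $\int_\Omega \overline{\Theta}\,u_n^2\,dx\to\int_\Omega \overline{\Theta}\,u_0^2\,dx$. The strong convergence in $H^s_0(\Omega)$ gives $[u_n]_s\to[u_0]_s$, so $\alpha[u_n]_s^2\to\alpha[u_0]_s^2$. Using the weak lower semicontinuity of $\|\cdot\|_{\mathbb{X}(\Omega)}$ on the local piece,
\[
0=\lim_{n\to\infty}\Bigl[\mathcal{B}_\alpha(u_n,u_n)-\int_\Omega\overline{\Theta}u_n^2\,dx\Bigr]\geq \mathcal{B}_\alpha(u_0,u_0)-\int_\Omega\overline{\Theta}(x)u_0^2\,dx.
\]
Since $u_0\in\mathbb{P}_{k+1}$, the variational estimate \eqref{eq:Stima1} gives $\mathcal{B}_\alpha(u_0,u_0)\geq\lambda_{k+1}\|u_0\|_{L^2(\Omega)}^2$, and the right-hand side above is therefore bounded below by $\int_\Omega(\lambda_{k+1}-\overline{\Theta}(x))u_0^2\,dx\geq 0$. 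Because $\overline{\Theta}<\lambda_{k+1}$ a.e., this chain of inequalities forces $u_0\equiv 0$.

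But if $u_0=0$, the strong convergences give $[u_n]_s\to 0$ and $\int_\Omega \overline{\Theta}u_n^2\,dx\to 0$, so
\[
\mathcal{B}_\alpha(u_n,u_n)-\int_\Omega\overline{\Theta}u_n^2\,dx=1+\alpha[u_n]_s^2-\int_\Omega\overline{\Theta}u_n^2\,dx\longrightarrow 1,
\]
contradicting the choice of the sequence. The stated equivalence between the two formulations is just the identity $\mathcal{B}_\alpha(u,u)=\|u\|_{\mathbb{X}(\Omega)}^2+\alpha[u]_s^2$ divided by $\|u\|_{\mathbb{X}(\Omega)}^2$.

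The main obstacle I expect is the case $\alpha<0$, where $\mathcal{B}_\alpha$ is not positive definite and the local and nonlocal pieces fight each other; the crucial ingredient that makes the compactness argument go through is precisely the compact embedding $\mathbb{X}(\Omega)\hookrightarrow H^s_0(\Omega)$, which upgrades weak convergence to genuine convergence of $[u_n]_s$ and lets one treat $\alpha[u_n]_s^2$ as a continuous (not merely semicontinuous) term.
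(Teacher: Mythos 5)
Your proof is correct and follows essentially the same route as the paper: contradiction, normalization, weak limit in $\mathbb{P}_{k+1}$ combined with the compact embeddings into $L^2(\Omega)$ and $H^s_0(\Omega)$ to pass the nonlocal and $\overline{\Theta}$-terms to the limit, weak lower semicontinuity of $\|\cdot\|_{\mathbb{X}}$ for the local term, the spectral estimate \eqref{eq:Stima1} to force $u_0\equiv 0$, and finally the persistence of the normalized local term to reach the contradiction. You are slightly more careful than the paper in explicitly checking that $\mathbb{P}_{k+1}$ is weakly closed; conversely, the claim that one can arrange $\mathcal{B}_\alpha(u_n,u_n)-\int_\Omega\overline{\Theta}u_n^2\,dx\to 0$ is a touch loose (a minimizing sequence on the unit sphere gives convergence to $\inf\le 0$, and only a posteriori does the argument show this infimum equals $0$), but the chain of inequalities you write handles this correctly, so there is no gap.
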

\begin{proof}
By contradiction, assume the existence of a sequence $\{u_n\}_n\subset \mathbb{P}_{k+1} \setminus\{0\}$ satisfying
\[
1+\frac{\alpha[u_n]_s^2-\int_\Omega \overline{\Theta}(x)u_n^2dx}{\|u_n\|_{\mathbb{X}(\Omega)}^2}\leq 0\quad\text{for any }n\in\mathbb{N}
\]
and denote $v_n=\frac{u_n}{\|u_n\|_{\mathbb{X}(\Omega)}}$ for any $n\in\mathbb{N}$.
Then, the sequence $\{v_n\}_n$ is bounded in $\mathbb{P}_{k+1}$ and, by reflexivity, weakly convergent (up to subsequences) to $v$ in $\mathbb{P}_{k+1}$. Therefore, up to a further subsequence, $\{v_n\}_n$ strongly converges to $v$ in $L^2(\Omega)$ and in $H^s(\mathbb{R}^n)$, and so 
\begin{equation}\label{absurd}
 1+\alpha[v_n]_s^2-\int_\Omega \overline{\Theta}(x)v_n^2dx\rightarrow1+\alpha[v]_s^2-\int_\Omega \overline{\Theta}(x)v^2dx\leq 0.
\end{equation}

Therefore, by \eqref{eq:Stima1} and \eqref{absurd},
\[
0\leq \int_\Omega(\lambda_{k+1}-\overline{\Theta}(x))v^2dx\leq \mathcal{B}_\alpha(v,v)-\int_\Omega \overline{\Theta}(x)v^2dx\leq 0.
\]
Since $\overline{\Theta}<\lambda_{k+1}$ a.e. in $\Omega$, we get $v=0$ and so, by \eqref{absurd}
\[
1=1+\alpha[v]_s^2-\int_\Omega \overline{\Theta}(x)v^2dx\leq0\,,
\]
which yields a contradiction.
\end{proof}
\begin{proof}[Proof of Theorem $\ref{thmSL}$]
We decompose $\mathbb{X}(\Omega)=H_k \oplus \mathbb{P}_{k+1}$,  and we show the existence of $\varrho,\tilde\alpha>0$ such that
\[
\inf_{u\in S_\varrho\cap \mathbb{P}_{k+1}}\mathcal{J}(u)=\tilde\alpha\,.
\]

We first we claim that for any $\varepsilon>0$ there exists $C_\varepsilon>0$ such that
\begin{equation}\label{eq.Claim2}
F(x,t)\leq\frac{1}{2}(\overline{\Theta}(x)+\varepsilon)t^2+C_\varepsilon|t|^r\quad\text{a.e. }x\in\Omega\text{ for all }t\in\mathbb{R}\,.
\end{equation}
Indeed, if we fix $\varepsilon>0$, by \eqref{iv}, there exists $\delta=\delta(\varepsilon)>0$ such that
\begin{equation}\label{delta1}
F(x,t)\leq\frac{1}{2}(\overline{\Theta}(x)+\varepsilon)t^2\quad\text{a.e. }x\in\Omega\text{ for all }|t|\leq\delta\,.
\end{equation}
Moreover, by \eqref{(ii)}
\begin{equation}\label{delta2}
F(x,t) \leq|\int_0^t(a(x)+b|\sigma|^{r-1})\,d\sigma|\leq a(x)|t|+b\frac{|t|^r}{r}\quad\text{a.e. }x\in\Omega\text{ for all }t\in\mathbb{R}\,.
\end{equation}
Combining \eqref{delta1} (for $|t|\leq\delta$) and \eqref{delta2} (for $|t|\geq\delta$), we finally get
\begin{align*}
    F(x,t)&\leq\frac{1}{2}(\overline{\Theta}(x)+\varepsilon)t^2+a(x)|t|\frac{|t|^{r-1}}{|t|^{r-1}}+\frac{b}{r}|t|^r\\
    &\leq\frac{1}{2}(\overline{\Theta}(x)+\varepsilon)t^2+(\frac{a(x)}{\delta^{r-1}}+\frac{b}{r})|t|^r\\
    &\leq \frac{1}{2}(\overline{\Theta}(x)+\varepsilon)t^2+C_\varepsilon|t|^r\quad\text{a.e. }x\in\Omega\text{ for all }t\in\mathbb{R}\,.
\end{align*}
Notice that, by \eqref{eq.Claim2} and the Sobolev inequality, we also have
\begin{equation}\label{StimesuF}
\begin{split}
\int_\Omega F(x,u)\,dx&\leq\frac{1}{2}\int_\Omega(\overline{\Theta}(x)+\varepsilon)|u|^2\,dx+C_\varepsilon\int_\Omega |u|^r\,dx\\
&\leq\frac{1}{2}\int_\Omega(\overline{\Theta}(x)+\varepsilon)|u|^2\,dx+\overline{C_\varepsilon}\|u\|_{\mathbb{X}}^r
\end{split}
\end{equation}
for any $u\in\mathbb{X}(\Omega)$.

Now, take $u\in \mathbb{P}_{k+1}$. Then, by \eqref{claim}, \eqref{StimesuF} and the Poincar\'e inequality
\begin{align*}
    \mathcal{J}(u)&=\frac{1}{2}\mathcal{B}_\alpha(u,u)-\int_\Omega F(x,u)\,dx\\
    &\geq\frac{1}{2}\mathcal{B}_\alpha(u,u)-\frac{1}{2}\int_\Omega(\overline{\Theta}(x)+\varepsilon)|u|^2\,dx-\overline{C_\varepsilon}\|u\|_{\mathbb{X}}^{r}\\
    &\geq\frac{1}{2}\beta\|u\|_{\mathbb{X}}^2-\frac{\varepsilon}{2}\|u\|_{L^2(\Omega)}^2-\overline{C_\varepsilon}\|u\|_{\mathbb{X}}^{r}\\
    &\geq\left(\frac{1}{2}\beta-\frac{\varepsilon}{2\mu_1} - \overline{C_\varepsilon}\|u\|_{\mathbb{X}}^{r-2}\right)\|u\|_{\mathbb{X}}^2\,,
\end{align*}
where $\mu_1$ denotes the first eigenvalue of $-\Delta$ in $\Omega$. Now we choose $\varepsilon$ and $\varrho>0$ so small that
\[
\inf_{u\in S_\varrho\cap \mathbb{P}_{k+1}}\mathcal{J}(u)=\tilde\alpha>0\,.
\]
To verify that $\mathcal{J}$ satisfies the geometric condition of the Linking theorem, we show the existence of a radius $\rho>\varrho$ such that
\[
\sup_{u\in\partial_{H_k \oplus\textrm{span}(u_{k+1})}\Delta}\mathcal{J}(u)\leq0\,,
\]
where $\Delta:=(\overline{B}_\rho\cap H_k)\oplus\{tu_{k+1}:t\in[0,\rho]\}$.

For this, first let us notice that $\mathcal{J}(u)\leq0$ for any $u\in  H_k$, as we easily get by combining \eqref{eq:Stima2}  with the assumption \eqref{lastassumption}.
Therefore,
\[
\sup_{ H_k}\mathcal{J}= 0\,.
\]
Moreover, notice that there exists a positive constant $c_1$ (if $\alpha<0$, then $c_1=1$) such that
\begin{equation}\label{equinorms}
    \mathcal{B}_\alpha(u,u)\leq c_1\|u\|_{\mathbb{X}}^2\quad\text{for any }u\in\mathbb{X}(\Omega).
\end{equation}

Now take $t>0$ and $u\in X_1$. Then, by \eqref{(iii)} and \eqref{equinorms} we have
\begin{equation*}
\begin{split}
    \mathcal{J}(u+tu_{k+1})&=\frac{1}{2}\mathcal{B}_\alpha(u+tu_{k+1},u+tu_{k+1})-\int_\Omega F(x,u+tu_{k+1})\,dx\\
    &\leq\frac{c_1t^2}{2}\|\frac{u}{t}+u_{k+1}\|_\mathbb{X}^2-ct^{\tilde \mu}\|\frac{u}{t}+u_{k+1}\|_{L^{\tilde\mu}(\Omega)}^{\tilde \mu}+\|d\|_{L^1(\Omega)}\to-\infty
\end{split}
\end{equation*}
as $t\to\infty$, having assumed $\tilde \mu>2$.

In this way the geometric assumptions of the Linking Theorem are satisfied. Notice that, letting $k=0$, we recover the geometric situation of the Mountain Pass Theorem when $\overline{\Theta}(x)<\lambda_1$ a.e. $x \in \Omega$.

We conclude by showing the validity of the Palais-Smale condition, that is, that every Palais-Smale sequence, i.e. every sequence $\{u_j\}_j$ in $\X$ such that  $\{\mathcal{J}(u_j)\}_j$ is bounded and $\mathcal{J}'(u_j)\to 0$ in $\X^{-1}$ as $j\to\infty$, has a strongly converging subsequence in $\mathbb{X}(\Omega)$.


So, let $\{u_j\}_j\subset\mathbb{X}(\Omega)$ be a Palais-Smale sequence. We first prove that $\{u_j\}_j$ is bounded in $\mathbb{X}(\Omega)$.

Assume, by contradiction, that $\{u_j\}_j$ is unbounded in $\mathbb{X}(\Omega)$. Then, there exists $u\in\mathbb{X}(\Omega)$ such that (up to subsequences)
\[
\frac{u_j}{\|u_j\|_\mathbb{X}}\to u\quad\text{weakly in }\mathbb{X}(\Omega) \mbox{ and strongly  in $H^s_0(\Omega)$, in $L^{\tilde \mu}(\Omega)$ and a.e. in $\Omega$} .
\]
Now, take $\eta\in (2,\mu)$. Then, by $(ii)$ and $(iii)$, there exists a positive constant $D_R$ such that
\begin{equation}\label{primostep}
\begin{aligned}
    0\leftarrow\frac{\eta\mathcal{J}(u_j)-\mathcal{J}'(u_j)u_j}{\|u_j\|_X^2}&=\left(\frac{\eta}{2}-1\right)\frac{\mathcal{B}_\alpha(u_j,u_j)}{\|u_j\|_\mathbb{X}^2}\\
    &\quad+\frac{\int_{\{|u_j|> R\}}(f(x,u_j)u_j\pm A(x)u_j^2-\eta F(x,u_j))dx}{\|u_j\|_\mathbb{X}^2}\\
    &\quad-\frac{\int_{\{|u_j|\leq R\}}(\eta F(x,u_j)-f(x,u_j)u_j)dx}{\|u_j\|_\mathbb{X}^2}\\
    &\geq \left(\frac{\eta}{2}-1\right)\left(1+\alpha\frac{[u_j]_s^2}{\|u_j\|_\mathbb{X}^2}\right)+\dfrac{(\mu-\eta)}{\|u_j\|_\mathbb{X}^2}\int_\Omega F(x,u_j)\,dx\\
    &\quad+\left(1-\frac{\mu}{2}\right)\frac{\int_\Omega Au_j^2dx}{\|u_j\|_\mathbb{X}^2}-\frac{D_R}{\|u_j\|_\mathbb{X}^2}\,.
\end{aligned}
\end{equation}
By Young's inequality, for every $\varepsilon>0$, there exists a constant $\tilde D>0$ such that
\begin{equation}\label{step2}
\left|\left(1-\frac{\mu}{2}\right)\right|\|A\|_{L^\infty(\Omega)}\|u_j\|^2_{L^2(\Omega)}\leq \varepsilon \int_\Omega|u_j|^{\tilde \mu}dx+\tilde D.
\end{equation}
By \eqref{primostep}, \eqref{step2} and $(iii)$ we get
\begin{equation}\label{giustosopra}
\begin{split}
    0\leftarrow \frac{\eta\mathcal{J}(u_j)-\mathcal{J}'(u_j)u_j}{\|u_j\|_\mathbb{X}^2}
    &\geq\left(\frac{\eta}{2}-1\right)\left(1+\alpha\frac{[u_j]_s^2}{\|u_j\|_\mathbb{X}^2}\right)\\
    &\quad+[(\mu-\eta)c-\varepsilon]\int_\Omega \frac{|u_j|^{\tilde \mu}}{\|u_j\|_\mathbb{X}^2}dx+o(1)\,,
\end{split}
\end{equation}
where $o(1)\to 0$ as $j\to \infty$. Choosing $\varepsilon <(\mu-\eta)c$, we immediately get that
\[
1+\alpha[u]_s^2\leq 0, 
\]
so that $u\neq 0$ if $\alpha<0$, while it is already a contradiction if $\alpha\geq0$. On the other hand, if in \eqref{giustosopra} we divide by $\|u_j\|_\mathbb{X}^{\tilde\mu-2}$ and pass to the limit, we get $u=0$, which yields a contradiction.

Thus, $\{u_j\}_j\subset\mathbb{X}(\Omega)$ is bounded in $\mathbb{X}(\Omega)$ and, by reflexivity and Rellich's theorem, there exists $u\in\mathbb{X}(\Omega)$ such that (up to subsequences)
\begin{center}
$u_j\to u$ weakly in $\mathbb{X}(\Omega)$ and strongly in $H^s_0(\Omega)$, in $L^p(\Omega)$ $(p<2^*)$ and a.e. in $\Omega$.
\end{center}
Now, it is standard to prove that $u_j\to u$ strongly in $\X$.
\medskip

Hence the Mountain Pass Theorem (if $\overline{\Theta}(x)<\lambda_1$) or the Linking Theorem can  be applied and Theorem \ref{thmSL} holds.
\end{proof}

\section*{Statements and Declarations}
\noindent\textbf{Funding} A. Maione is supported by the DFG SPP 2256 project ``Variational Methods for Predicting Complex Phenomena in Engineering Structures and Materials'', by the University of Freiburg and by INdAM-GNAMPA Project ``Equazioni differenziali alle derivate parziali in fenomeni non lineari''. D. Mugnai is supported by the FFABR ``Fondo per il finanziamento delle attivit\`a base di ricerca'' 2017 and by the INdAM-GNAMPA Project ``PDE ellittiche a diffusione mista''.
E. Vecchi is supported by INdAM-GNAMPA Project ``PDE ellittiche a diffusione mista'' (INdAM-GNAMPA Projects Grant number CUP\_E55F22000270001).

\noindent\textbf{Acknowledgements.} The authors wish to thank Enzo Vitillaro and Maicol Caponi for useful discussions,  and the anonymous referees for their careful reading. All their comments  led to an improvement of the first version of the paper.

\noindent\textbf{Conflict of interest} The authors declare that they have no conflict of interest.

\end{document}